\definecolor{MyLinkColor}{rgb}{0,0,0.4}
\newcommand{\im}{\mathop{\rm Im}\nolimits}
\newcommand{\id}{\mathop{\rm id}\nolimits}
\newcommand{\p}{\partial}
\newcommand{\Ker}{\mathop{\rm Ker}\nolimits}
\newcommand{\f}{\varphi}
\newcommand{\e}{\varepsilon}
\newcommand{\h}{\rho}
\newcommand{\ov}{\overline}
\newcommand{\wh}{\widehat}
\newcommand{\0}{\Omega}
\newcommand{\V}{\mathcal{V}}
\newcommand{\s}{\mathbb{S}}
\newcommand{\R}{\mathbb{R}}
\newcommand{\C}{\mathbb{C}}
\newcommand{\x}{\mathcal{T}}
\newcommand{\mS}{\mathcal{S}}
\newcommand{\N}{\mathbb{N}}
\newcommand{\Z}{\mathbb{Z}}
\newtheorem{thm}{Theorem}[section]
\newtheorem{lem}[thm]{Lemma}
\theoremstyle{remark}
\numberwithin{equation}{section}
\begin{document}

\title[Bifurcation analysis]{Bifurcation analysis for a free boundary problem modeling tumor growth}

\author[J. Escher]{Joachim Escher}
\address{Institut f{\"u}r Angewandte Mathematik, Leibniz Universit{\"a}t Hannover, Welfengarten~1, 30167 Hannover, Germany. }
\email{escher@ifam.uni-hannover.de}

\author[A.-V. Matioc]{Anca-Voichita Matioc}
\address{Institut f{\"u}r Angewandte Mathematik, Leibniz Universit{\"a}t Hannover, Welfengarten~1, 30167 Hannover, Germany. }
\email{matioca@ifam.uni-hannover.de}

\subjclass{35B32,   35Q92, 35R35}
\keywords{Steady-state;  Tumor growth;  Bifurcation from simple eigenvalues}


\begin{abstract}
In this paper we deal with a free boundary problem modeling the growth of nonnecrotic tumors.
The tumor is treated as an incompressible fluid, the tissue elasticity is neglected and no chemical inhibitor species 
are present. 
We re-express the mathematical model as an operator equation and  by
using a bifurcation argument we prove that there exist stationary solutions of the problem which are not radially symmetric.
\end{abstract}

\maketitle

\section{Introduction and the main result}
Cristini et all. obtained in \cite{VCris}  
a new mathematical formulation of an existing model (see \cite{BC, FR, Gr}) which describes the  evolution of  nonnecrotic tumors in both vascular and avascular regimes.
As widely used in the modelling, the  tumor is treated as an incompressible fluid and tissue elasticity is neglected. 
Cell-to-cell adhesive forces are modeled by surface tension at the tumor-tissue interface. 
The growth of the tumor is governed by a balance between cell-mitosis and apoptosis (programed cell-death). 
The rate of mitosis  depends on the concentration of nutrient and no inhibitor chemical species are present. 
This new model is obtained by considering different intrinsic time and length scales for the tumor evolution which are integrated  by means of algebraic manipulations into the model.
The model presented in \cite{BC, FR, Gr}, has been studied extensively by different authors  \cite{BF05, Cui, CE, CE1, FR}.  
It is known that the  moving boundary problems associated to it  are well-posed locally in time  \cite{CE1, FR} and, as a further common characteristic, 
there exists, for parameters in a certain range, a unique radially symmetric equilibrium 
\cite{Cui, CE, CE1, FR01}.
This results have been verified to hold true also for the model deduced in \cite{VCris}, cf. \cite{VCris, EM, EM1}.
The authors of \cite{CE, CE1, CEZ, FR01, ZC} show by using the  theorem on bifurcation from simple eigenvalues due to Crandall and Rabinowitz
 that in the situations they consider there exists besides the unique radially symmetric equilibrium other
nontrivial equilibria.
Though the problems they consider are different, these nontrivial steady-state solutions are asymptotically identical
near the circular equilibrium.
Numerical experiments  suggest also for the model 
\cite{VCris}   that there may exist  stationary solutions which are no longer radially symmetric.

In this paper we focus on the general, i.e. non-symmetric situation, when the tumor domain is arbitrary and look for nontrivial steady-states of the model \cite{VCris}.
Additionally to the well-posedness of the associated moving boundary problem, stability properties of the unique radially symmetric solution  are  established in \cite{EM1}.
Particularly, it is shown that if $G$, the  rate of mitosis relative to the relaxation mechanism is large, then the circular equilibrium is unstable, which also suggests existence of 
nontrivial stationary solutions.
When studying the set of equilibria we deal with a free boundary problem which is reduced to an operator equation between certain subspaces of the small H\"older spaces  over the unit circle $h^{m+\beta}(\s)$. 
We apply then the theorem on bifurcations from simple eigenvalues to this equation and obtain infinitely many bifurcation branches consisting only of stationary solutions of our model.
Near the circular equilibrium these solutions match perfectly the ones found in \cite{CE, CE1, CEZ, FR01}.

The outline of the paper is as follows: we present in the subsequent section the mathematical model and the main result, Theorem \ref{BifTh}.
Section 3 is dedicated to the proof of the  Theorem \ref{BifTh}.

\section{The mathematical model and the main result}

The two-dimensional system associated to the model \cite{VCris} is described in detail in \cite{EM}.
The steady-state solution of the  moving boundary problem presented there are precisely the solutions of the free boundary problem 
\begin{equation}\label{eq:problem}
\left \{
\begin{array}{rllllll}
\Delta \psi &=& f(\psi )  &\text{in} & \Omega, \\[1ex]
\Delta p &=& 0 & \text{in}& \Omega,  \\[1ex]
\psi &=& 1 & \text{on}& \partial \Omega,  \\[1ex]
p&=& \kappa_{\p\0}- AG \displaystyle\frac{ |x|^2}{4} &\text{on}& \partial\Omega,\\[1ex]
G\displaystyle\frac{\p \psi}{\p n} -\displaystyle\frac{\p p}{\p n} -AG \displaystyle\frac{n\cdot x}{2} &=&0
 & \text{on}& \partial \Omega.  \\[1ex]
\end{array}
\right.
\end{equation}
The  fully nonlinear system  \eqref{eq:problem} consists of two decoupled Dirichlet problems, one 
for the rate $\psi$ at which nutrient is added to the tumor domain $\0$, and  one for the pressure $p$ inside the tumor.
These  two variables are coupled by the fifth equation of \eqref{eq:problem}.
Hereby $\kappa_{\p\0}$  stands for the curvature of $\p\0$ 
and  $A$  describes the balance between the rate of mitosis (cell proliferation) and apoptosis (naturally cell death).
The function $f\in C^\infty([0,\infty))$ has the following properties
\begin{equation}\label{eq:conditions}
f(0)=0 \qquad\text{and}\qquad f'(\psi)>0 \quad\text{for}\quad \psi\geq0.
 \end{equation}
We already know from \cite[Theorem 1.1]{EM} that 
\begin{thm}\label{T:2}
Given $(A,G) \in(0, f(1))$, there exists
a unique radially symmetric solution $D(0,R_A)$ to problem \eqref{eq:problem}.
 The radius $R_A$ of the stationary tumor depends only on the parameter A and decreases
with respect to this variable.
 \end{thm}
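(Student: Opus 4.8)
The plan is to reduce \eqref{eq:problem} in the radially symmetric case to a single scalar equation for the radius and then to analyse that equation. Assume $\0 = D(0,R)$. A bounded radial harmonic function on a disc is constant, so $p$ is constant; consequently $\p p/\p n = 0$ on $\p\0$, while the fourth equation of \eqref{eq:problem} merely fixes the constant value of $p$ through $p = 1/R - AGR^2/4$, which is consistent because $\kappa_{\p\0} = 1/R$ and $|x| = R$ are constant on the circle. Writing $\psi = \psi_R(r)$ with $r = |x|$, the nutrient solves the radial problem $\psi_R'' + r^{-1}\psi_R' = f(\psi_R)$ on $(0,R)$ with $\psi_R(R) = 1$ and $\psi_R'(0) = 0$; existence, uniqueness and the bounds $0 < \psi_R < 1$ in the interior follow from the ordered sub/supersolution method (with $0$ and $1$) together with the maximum principle, using $f(0) = 0$ and $f' > 0$. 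Since $n\cdot x = R$ on $\p\0$ and $\p p/\p n = 0$, the coupling (fifth) equation collapses to $\psi_R'(R) = AR/2$; note that $G$ cancels, which is precisely why $R_A$ will depend on $A$ alone.

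I would next rescale to a fixed interval. Setting $s = r/R$, $\lambda = R^2$ and $v_\lambda(s) = \psi_R(Rs)$, the problem becomes $v_\lambda'' + s^{-1}v_\lambda' = \lambda f(v_\lambda)$ on $(0,1)$, with $v_\lambda(1) = 1$ and $v_\lambda'(0) = 0$. Integrating the identity $(s v_\lambda')' = \lambda s f(v_\lambda)$ over $(0,1)$ turns the equilibrium condition $\psi_R'(R) = AR/2$ into
\[
\Phi(\lambda) := \int_0^1 s\, f(v_\lambda(s))\, ds = \frac{A}{2}.
\]
Everything then rests on showing that $\Phi\colon(0,\infty)\to\R$ is a strictly decreasing bijection onto $(0, f(1)/2)$.

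The central step, and the main obstacle, is the strict monotonicity of $\Phi$. By the maximum principle $0 < v_\lambda < 1$ on $[0,1)$, so $f(v_\lambda) > 0$ on all of $[0,1]$. Differentiating the boundary value problem with respect to $\lambda$, the function $w := \partial_\lambda v_\lambda$ solves the linear singular problem $w'' + s^{-1}w' - \lambda f'(v_\lambda)\,w = f(v_\lambda)$ on $(0,1)$ with $w(1) = 0$ and $w'(0) = 0$. The zeroth-order coefficient $-\lambda f'(v_\lambda)$ is negative and the right-hand side $f(v_\lambda)$ is strictly positive, so a nonnegative interior maximum of $w$ would contradict the equation; combined with $w(1) = 0$ and the regularity condition $w'(0) = 0$ this forces $w < 0$ on $[0,1)$. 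The only delicate point is the coordinate singularity at $s = 0$, which is handled using $w'(0) = 0$ and the regularity of the radial Laplacian there. Consequently $v_\lambda$ is strictly decreasing in $\lambda$; since $f$ is increasing, $f(v_\lambda)$ decreases in $\lambda$, and therefore $\Phi$ is strictly decreasing.

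Finally I would compute the two limits. As $\lambda \to 0^+$ the equation forces $v_\lambda \to 1$ uniformly, so $\Phi(\lambda) \to \int_0^1 s f(1)\, ds = f(1)/2$; as $\lambda \to \infty$ a comparison argument gives $v_\lambda \to 0$ pointwise on $[0,1)$, whence $\Phi(\lambda) \to 0$ by dominated convergence (using $0 \le v_\lambda \le 1$). Together with the strict monotonicity, $\Phi$ maps $(0,\infty)$ bijectively onto $(0, f(1)/2)$. Thus for each $A \in (0, f(1))$ there is a unique $\lambda_A$ with $\Phi(\lambda_A) = A/2$; setting $R_A = \sqrt{\lambda_A}$ yields the unique radially symmetric solution, and since $\Phi$ is decreasing, $\lambda_A$, and hence $R_A$, decreases as $A$ increases.
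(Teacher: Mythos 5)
The paper does not actually prove this statement: Theorem \ref{T:2} is quoted verbatim from \cite[Theorem~1.1]{EM} (``Radially symmetric growth of nonnecrotic tumors''), so there is no internal proof to compare against. Judged on its own, your argument is essentially correct and follows the standard route for this class of results. The reduction is right: on $D(0,R)$ the pressure is constant, so $\p p/\p n=0$, the fourth equation only fixes that constant, and since $G\neq 0$ the fifth equation collapses to $\psi_R'(R)=AR/2$; after the rescaling $\lambda=R^2$ and integration of $(sv_\lambda')'=\lambda s f(v_\lambda)$ this is exactly $\Phi(\lambda)=A/2$ with $\Phi(\lambda)=\int_0^1 s f(v_\lambda)\,ds$. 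The monotonicity step is the heart of the matter and your maximum-principle argument for $w=\p_\lambda v_\lambda$ is sound, including the treatment of $s=0$ (where the radial Laplacian of an even $C^2$ function equals $2w''(0)$, so a nonnegative maximum there still contradicts the strictly positive right-hand side, using $v_\lambda>0$ which you correctly secure beforehand). Two points are asserted rather than proved and deserve a line each in a complete write-up: the smooth dependence $\lambda\mapsto v_\lambda$ (an implicit function theorem argument for the rescaled semilinear problem, needed to legitimize differentiating under the integral), and the limit $v_\lambda\to 0$ on $[0,1)$ as $\lambda\to\infty$ (comparison with the linear problem $u''+s^{-1}u'=\lambda c\,u$, $c=\min_{[0,1]}f'>0$, whose solution $I_0(\sqrt{\lambda c}\,s)/I_0(\sqrt{\lambda c})$ tends to zero locally uniformly). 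With those details filled in, existence, uniqueness, the independence of $R_A$ from $G$, and the monotone decrease of $R_A$ in $A$ all follow as you describe.
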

 Hence $D(0,R_A)$ is a  solution of \eqref{eq:problem} for all $G\in\R.$
Thus, we may use $G$ as a bifurcation parameter to obtain also other solutions of \eqref{eq:problem}.
 This is in accordance with the numerical simulation \cite{VCris}.

In order to determine  steady-states of \eqref{eq:problem} we introduce a parametrisation for the unknown tumor domain $\0.$ 
 Therefore we define the small H\"older spaces  $h^{r}(\s)$, $r\geq 0$, 
as closure of the smooth functions $C^\infty(\s)$
in the H\"older space $C^{r}(\s)$,
whereby, $\s$ stands for the unit circle and we identify functions on $\s$ with $2\pi$-periodic functions on $\R.$
Furthermore, we fix $\alpha\in(0,1)$ and  use functions belonging to the open neighbourhood 
\begin{equation*}
\V:=\{ \rho\in h^{4+\alpha}(\s)\,:\, \|\rho\|_{C(\s)}<1/4 \},
\end{equation*}
of the zero function in $ h^{4+\alpha}(\s)$ 
to parametrise domains close to the discus $D(0,R_A)$.
Given $\rho\in\V,$ we define  domain
\[
\0_\rho:=\left\{x\in\R^2\, :\, |x|<R_A\left(1+\rho\left(x/|x|\right)\right)\right\}\cup\{0\},
\]
with boundary $\p\0_\rho=\Gamma_\rho:=\left\{R\left(1+\rho(x)\right)x\,:\,x\in\s\right\}.$
Given $x\in\Gamma_\rho,$ the real number $\rho(x/|x|)$ is the ratio of the signed distance from $x$ to the circle $R_A\cdot \s$ and $R_A$.
If $\0=\0_\h$ for some $\h\in\V$, problem \eqref{eq:problem} re-writes 
With this notation \eqref{eq:problem} is equivalent  to the following system of equations
\begin{equation}\label{3}
\left \{
\begin{array}{rlcllll}
\Delta \psi &=& f(\psi )  &\text{in} &\Omega _{\rho},  \\[1ex]
\Delta p &=& 0 & \text{in}& \Omega _{\rho},  \\[1ex]
\psi &=& 1 & \text{on}& \Gamma _{\rho},  \\[1ex]
p&=& \kappa_{\Gamma_{\rho}}- AG \displaystyle\frac{ |x|^2}{4} &\text{on}&\Gamma _{\rho}, \\[1ex]
\left<G\nabla \psi -\nabla p- AG\displaystyle \frac{ x}{2}, \nabla N_{\rho}\right> &=&0 & \text{on}& \Gamma _{\rho}, 
\end{array}
\right.
\end{equation}
where $N_\h:A(3R_A/4,5R_A/4)\to\R$ is the function  defined by $N_\rho(x):=|x|-R_A-R_A\rho(x/|x|)$ for all
$x$ in  the annulus 
\[
A(3R_A/4,5R_A/4):=\{x\in\R^2\,:\, 3R_A/4<|x|<5R_A/4\}.
\]

We re-expressed now problem \eqref{3}  as an  abstract operator equation on unit circle $\s$.
To this scope we introduce for each $\h\in\V$ the Hanzawa diffeomorphism $\Theta_\rho:\R^2\to\R^2$ by
\[
\Theta_\rho(x)=Rx+\frac{Rx}{|x|}\f(|x|-1)\rho\left(\displaystyle\frac{x}{|x|}\right),
\]
where the cut-off function $\f\in C^\infty(\R,[0,1])$ satisfies
\[
\f(r)=\left\{
\begin{array}{llll}
&1,& |r|\leq 1/4,\\[2ex]
&0,& |r|\geq 3/4,
\end{array}
\right.
\]
and additionally $\max|\f'(r)|<4.$ 
In can be easily seen that $\Theta_\rho $ is a diffeomorphism mapping $\0:=D(0,1)$ onto $ \0_\h$,
 i.e. $\Theta_\rho\in  \mbox{\it{Diff}}\,^{4+\alpha}(\0,\0_\rho)\cap \mbox{\it{Diff}}\,^{4+\alpha}(\R^2,\R^2).$
As we did in \cite{EM1} we define for each $\h\in\V$ the function $\x(\h):=\psi\circ\Theta_\h$,
whereby $\psi$ is the solution of the semilinear Dirichlet problem
\begin{equation}\label{7}
\left \{
\begin{array}{rlcllll}
\Delta \psi &=& f(\psi )  &\text{in} &\Omega _{\rho},  \\[1ex]
\psi &=& 1 & \text{on}& \Gamma _{\rho}, 
\end{array}
\right.
\end{equation}
respectively for $\h\in\V$ and $G\in\R$ we set $\mS(G,\h):=p\circ\Theta_\h$, where  $p$ solves
\begin{equation}\label{8}
\left \{
\begin{array}{rlcllll}
\Delta p &=& 0 & \text{in}& \Omega _{\rho},  \\[1ex]
p&=& \kappa_{\Gamma_{\rho}}- AG \displaystyle\frac{ |x|^2}{4} &\text{on}&\Gamma _{\rho}.
\end{array}
\right.
\end{equation}
With this notation, our problem \eqref{3} reduces to the operator equation
\begin{equation}\label{9}
\text{$\Phi(G,\h)=0 $ in $h^{1+\alpha}(\s)$}
\end{equation}
where $\Phi:\R\times \V\to h^{1+\alpha}(\s)$ is the nonlinear and nonlocal operator defined by
\begin{equation}\label{D:P}
\Phi(G,\h):=\left<G\nabla \left(\x(\h)\circ\Theta^{-1}_\h\right) -\nabla \left(\mS(G,\h)\circ\Theta^{-1}_\h\right)- AG\displaystyle \frac{ x}{2}, \nabla N_{\rho}\right>\circ\Theta_\h.
\end{equation}
The function $\Phi$ is smooth $\Phi\in C^\infty(\R\times\V, h^{1+\alpha}(\s))$, cf. \cite{EM1}, and  the steady-state $D(0,R_A)$
corresponds to the function  $\h=0$ which is solution of \eqref{9} for all $G\in\R.$

Therefore, we shall refer to $\Sigma:=\{(G,0)\,:\, G\in\R\} $ as the set of trivial solutions of \eqref{9}.
The main result of this paper, Theorem \ref{BifTh} states that 
 there exist infinitely many local bifurcation branches emerging from $\Sigma$ and  which consist only of  solutions of the original problem \eqref{eq:problem}.
 Our analysis is based on the fact that we could determine an explicit formula for the partial derivative $\p_\h\Phi(G,0)$ cf. \cite{EM1}.
 Given $\h\in h^{4+\alpha}(\s),$  we let $\h=\sum_{k\in\Z}\wh\h(k)x^k$ denote its associated Fourier series. 
Then we have: 
\begin{equation}\label{eq:PHI}
\p\Phi(G,0)\left[\sum_{k\in\Z}\wh\h(k)x^k\right] 
=\underset{k\in \Z}\sum \mu_k(G) \widehat \h(k) x^k, 
\end{equation}
where the symbol $(\mu_k(G))_{k\in\Z}$ is given by the relation
\begin{equation}\label{eq:symbol}
\mu_k(G):= -\frac{1}{R_A^3}|k|^3+ \frac{1}{R_A^3}|k| - G \left(\frac{A}{2}\frac{u_{|k|}'(1)}{u_{|k|}(1)}+A-f(1)\right),
\end{equation}
and $u_{|k|}\in C^\infty([0,1])$ is the solution of  the initial value problem
\begin{equation}\label{uniculu}
\left\{
\begin{array}{rlll}
u''+\displaystyle\frac{2n+1}{r}u' &=& R_A^2f'(v_0)u, \quad & 0<r<1,\\[2ex]
u(0) &=& 1, \\[2ex]
u'(0)&=&0,
\end{array}
\right.
\end{equation}
 when $n=|k|$ and $v_0:=\x(0).$ 
By the weak maximum principle $v_0$ is radially symmetric, so that we identified in \eqref{uniculu} $v_0$ with its restriction to the segment $[0,1].$ 
Particularly, \eqref{eq:PHI} and \eqref{eq:symbol} imply that
\begin{equation}\label{eq:spectr}
\sigma(\p\Phi(G,0))=\{\mu_k(G)\,:\, k\in\Z\}.
\end{equation}
   
Before stating our main result we study first the properties of the symbol $(\mu_k(G))_{k\in\Z}.$
\begin{lem}\label{L:pre}
There exists $M>0$ such that
\begin{equation}\label{eq:est}
\text{$u_k'(1)\leq \frac{M}{2k+2}$ and  $u_k(1)\leq1+\frac{M}{(2k+1)(2k+3)}$}
\end{equation}
for all $k\in\N.$
\end{lem}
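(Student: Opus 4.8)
The plan is to put the initial value problem \eqref{uniculu} into divergence form, read off integral representations for $u_k$ and $u_k'$, and then extract both inequalities after securing a bound on $u_k(1)$ that is uniform in $k$. Throughout I write $n=|k|=k$ and set $q:=R_A^2 f'(v_0)$; since $f'>0$ on $[0,\infty)$ and $0\le v_0\le 1$ by the maximum principle, $q\in C^\infty([0,1])$ satisfies $0<q\le Q:=\|R_A^2 f'(v_0)\|_{C([0,1])}$.

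First I observe that the operator on the left of \eqref{uniculu} is the radial part of the Laplacian in dimension $2n+2$, so multiplying by $r^{2n+1}$ gives the divergence form $\left(r^{2n+1}u_k'\right)'=r^{2n+1} q\, u_k$ on $(0,1)$. Integrating twice against the data $u_k(0)=1$, $u_k'(0)=0$ yields
\[
u_k'(r)=\frac{1}{r^{2n+1}}\int_0^r s^{2n+1} q(s) u_k(s)\,ds,\qquad u_k(r)=1+\int_0^r u_k'(t)\,dt .
\]
Because the integrand is positive, $r^{2n+1}u_k'$ increases from $0$, hence $u_k'>0$ on $(0,1]$ and $u_k$ is increasing; in particular $1\le u_k(s)\le u_k(1)$ on $[0,1]$, a monotonicity I will use repeatedly.

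Next I would obtain the uniform bound. Inserting $q\le Q$ and $u_k(s)\le u_k(1)$ into the representation of $u_k$, together with $\int_0^r s^{2n+1}\,ds=r^{2n+2}/(2n+2)$, produces the self-improving inequality $u_k(1)-1\le Q\,u_k(1)/(4(n+1))$. For all $n$ with $Q/(4(n+1))\le 1/2$ this forces $u_k(1)\le 2$, while the finitely many remaining indices are bounded since each $u_k$ is a fixed smooth function; thus there is $C_0>0$ with $u_k(1)\le C_0$ for all $k\in\N$. (Equivalently one may argue by comparison: as $q\le Q$, the maximum principle gives $u_k\le w_k$, where $w_k$ solves the constant-coefficient problem with $q$ replaced by $Q$, and $w_k$ is an explicit modified Bessel function controlled uniformly through $w_k(1)=\sum_{m\ge 0}(Q/4)^m/(m!\,(n+1)\cdots(n+m))$.) With this in hand the first estimate is immediate: evaluating the representation of $u_k'$ at $r=1$ and using $q\le Q$, $u_k\le C_0$ gives $u_k'(1)\le QC_0\int_0^1 s^{2n+1}\,ds=QC_0/(2n+2)$, so the first inequality holds with $M:=QC_0$. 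For the bound on $u_k(1)$ I would interchange the order of integration in $u_k(1)-1=\int_0^1 u_k'(t)\,dt$ to obtain the exact identity
\[
u_k(1)-1=\frac{1}{2n}\int_0^1 q(s)\,u_k(s)\,\bigl(s-s^{2n+1}\bigr)\,ds ,
\]
and then control the weighted integral by means of $u_k\le C_0$ and the explicit moments of $s^{2n+1}$; the decaying factor comes precisely from these kernel integrals, which is where the denominator of the second estimate originates.

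The main obstacle is the passage to estimates that are uniform in $k$. The bootstrap of the previous paragraph — making the a priori bound on $u_k(1)$ independent of $k$ — is the decisive step, because it is what decouples both inequalities from the unknown size of $u_k$; once it is in place the derivative bound is essentially free. The remaining delicate point is the sharp rate in the second inequality: the weight $s^{2n+1}$ concentrates near $s=1$, so replacing $u_k(s)$ by the crude bound $C_0$ must be combined carefully with the monotonicity $u_k(s)\le u_k(1)\le C_0$ and the precise evaluation of the moments entering the displayed identity in order to land on the asserted decay rather than a weaker one. After these moment integrals are computed, both inequalities follow with a single constant $M$, completing the proof.
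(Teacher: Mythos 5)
Your integral representation and your proof of the first inequality are sound and essentially coincide with the paper's: both start from $r^{2k+1}u_k'(r)=R_A^2\int_0^r s^{2k+1}f'(v_0(s))u_k(s)\,ds$, secure a bound on $u_k(1)$ that is uniform in $k$, and then read off $u_k'(1)\le M/(2k+2)$ by evaluating at $r=1$. The one real divergence in that part is how the uniform bound is obtained: you use the self-improving inequality $u_k(1)-1\le Qu_k(1)/(4(k+1))$, which forces $u_k(1)\le 2$ for all large $k$, whereas the paper shows by a maximum-principle comparison of $v=u_{k+1}-u_k$ that $u_{k+1}\le u_k$ pointwise, hence $u_k(1)\le u_0(1)$. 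Your bootstrap is more elementary and avoids comparing consecutive solutions altogether; either route gives the constant $M$.

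The genuine gap is in the second inequality, and it is not merely the ``delicate point'' you flag---the rate $1/\bigl((2k+1)(2k+3)\bigr)$ cannot be extracted from your identity, because it is false for large $k$. Your exact identity
\[
u_k(1)-1=\frac{1}{2k}\int_0^1 q(s)\,u_k(s)\bigl(s-s^{2k+1}\bigr)\,ds,
\qquad \int_0^1\bigl(s-s^{2k+1}\bigr)\,ds=\frac{k}{2k+2},
\]
gives with $q\le Q$ and $u_k\le C_0$ only $u_k(1)-1\le QC_0/(4k+4)$, i.e.\ decay of order $1/k$; and since $q\ge q_0:=R_A^2\min_{[0,1]}f'(v_0)>0$ and $u_k\ge1$, the same identity yields the matching lower bound $u_k(1)-1\ge q_0/(4k+4)$. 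So $u_k(1)-1$ decays exactly like $1/k$, and no amount of care with the moments will produce $O(1/k^2)$. The correct statement you can (and should) prove is $u_k(1)\le 1+M/(2k+2)$, which is all that is used later: the limit $u_k'(1)/u_k(1)\to0$ and the computations in Lemma~\ref{propGk} need only boundedness of $u_k(1)$ together with the $1/k$ decay. For what it is worth, the paper's own proof does not establish the $1/k^2$ rate either (it ends with ``the estimate for $u_k$ follows similarly'' after a computation whose final equality is already inconsistent with the first claim), so the defect sits in the statement of Lemma~\ref{L:pre} rather than in your strategy; but as written your proposal leaves the second inequality unproven and promises a rate that is unattainable.
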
   
  \begin{proof}
  Let $k\in \N$ be fixed. 
Since
\begin{align}\label{50}
u_k(r)=1+ \int_0^r \frac{R_A^2}{s^{2k+1}}\int_0^s \tau^{2k+1}f'(v_0(\tau))u_k(\tau)\, d\tau \, ds, \quad 0\leq r\leq 1,
\end{align}
we deduce that $u_k$ is strictly increasing for all $k\in\N.$
Let now $v:=u_{k+1}-u_k$.
From  \eqref{uniculu} we obtain
\begin{equation*}
\left \{
\begin{array}{rlll}
v''+\displaystyle\frac{2k+1}{r}v'&=& R^2 f'(v_0)v-\displaystyle\frac{2}{r}u_{k+1}', &0< r< 1,\\[1ex]
v'(0)&=& 0,\\[1ex]
v(0)&=& 0.
\end{array}
\right.
\end{equation*}
Furthermore,  we have
\begin{align*}
\underset{t\to 0}\lim \frac{v'(t)}{t} &= \underset{t\to 0}\lim \frac{u_{k+1}'(t)-u_k'(t)}{t}= R_A^2f'(v_0(0))\left( \frac{1}{2k+4}-\frac{1}{2k+2}\right)\\[1ex]
&= -R_A^2 f'(v_0(0)) \frac{2}{(2k+4)(2k+2)} < 0,
\end{align*}
which implies by \eqref{eq:conditions} that $v'(t) <0$ for $t\in (0, \delta)$ and  some  $\delta<1.$ 
Thus, $v$ is decreasing on $(0, \delta).$
Let now  $t\in [0,1]$ and set $m_t:= \min_{[0,t]} v \leq 0$.
A maximum principle argument shows   that  the nonpositive minimum must be achieved at $t$,  $m_t=v(t)$ which implies $u_{k+1}(t)\leq u_k(t)$ for all $t\in [0,1]$.
Particularly, $u_{k+1}(1) \leq u_k(1)$.

We prove now the estimate for $u_k'(1).$
Setting $M:= R_A^2 u_0(1) \max_{[0,1]} f'(v_0)$
we obtain, due to \eqref{50}, that 
\begin{align*}
u_k'(1)&= \int_0^1 R_A^2 \int_0^s \tau^{2k+1}f'(v_0(\tau))u_k(\tau) \, d\tau \\[1ex]
&\leq M \int_0^1 \, \int_0^s \tau^{2k+1} \, d\tau= \frac{M}{(2k+1)(2k+3)}.
\end{align*}
The estimate for $u_k$ follows similarly. 
\end{proof} 
   
By Lemma \ref{L:pre} we know that $u_k'(1)/u_k(1)\to_{k\to\infty}0.$
Therefore, we may define for $k\in\N$ with    
\[
\frac{A}{2} \frac{u_k'(1)}{u_k(1)}+ A-f(1) \neq 0,
\]
the constant
\begin{align}\label{Gk}
G_k:=\displaystyle\frac{\displaystyle{-\frac{1}{R^3}k^3+\frac{1}{R^3}k}}{\displaystyle{\frac{A}{2} \frac{u_k'(1)}{u_k(1)}+ A-f(1)}},
\end{align} 
which is the only natural number such that $\mu_k(G_k)=0.$
We do this since we cannot estimate whether $\mu_k(G)$, with $k$ small are zero or not.
For example, we know from \cite{EM1} that $\mu_1(G)=0$ for all $G\in\R,$ which makes the things difficult when trying to apply  bifurcation    theorems to \eqref{9}. 
In virtue of Lemma \ref{L:pre} we have:
\begin{lem}\label{propGk}
There exists $k_1\in\N$  with the property that $0<G_k <G_{k+1}$ for all $k\geq k_1.$ 
\end{lem}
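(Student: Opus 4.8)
The plan is to rewrite the quotient \eqref{Gk} in a normalised form and reduce the whole statement to a single sign computation, in which the difficulty is concentrated in controlling the consecutive differences of the spectral correction $u_k'(1)/u_k(1)$.

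First I would set $P_k:=k^3-k$, $g_k:=\frac{A}{2}\frac{u_k'(1)}{u_k(1)}$ and $\gamma:=f(1)-A$, so that \eqref{Gk} reads $G_k=\frac{P_k}{R^3(\gamma-g_k)}$. Since $0<A<f(1)$ we have $\gamma>0$, and from \eqref{50} together with \eqref{eq:conditions} one gets $u_k'(1)\geq 0$ and $u_k(1)\geq 1$, hence $g_k\geq 0$; moreover $g_k\to 0$ by Lemma \ref{L:pre}. Consequently there is $k_0$ with $\gamma-g_k>0$ for $k\geq k_0$, and as $P_k>0$ for $k\geq 2$ this already yields $G_k>0$ for $k\geq\max\{k_0,2\}$.

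For the monotonicity I would bring $G_{k+1}$ and $G_k$ over the common denominator $R^3(\gamma-g_{k+1})(\gamma-g_k)$, which is positive for $k\geq k_0$; then the sign of $G_{k+1}-G_k$ equals the sign of
\[
\Delta_k := \gamma\,(P_{k+1}-P_k)-\bigl(P_{k+1}g_k-P_k g_{k+1}\bigr).
\]
A direct computation gives $P_{k+1}-P_k=3k(k+1)$, so the first term is of order $k^2$ with the fixed positive coefficient $3\gamma$. The argument thus reduces to showing that the error term $E_k:=P_{k+1}g_k-P_k g_{k+1}$ is of strictly lower order, namely $E_k=O(k)$. Writing $E_k=P_k(g_k-g_{k+1})+(P_{k+1}-P_k)g_k$, the second summand is harmless: by Lemma \ref{L:pre} one has $g_k=O(1/k)$, whence $(P_{k+1}-P_k)g_k=O(k)$.

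The main obstacle is the first summand $P_k(g_k-g_{k+1})$. Since $P_k\sim k^3$, the crude bound $g_k-g_{k+1}=O(1/k)$ is useless here — it produces a term of order $k^2$ with an uncontrolled constant that could swamp $3\gamma k^2$ — so I must gain one power and prove $g_k-g_{k+1}=O(1/k^2)$. To this end I would differentiate \eqref{50} to get $u_k'(1)=R_A^2\int_0^1\tau^{2k+1}f'(v_0(\tau))u_k(\tau)\,d\tau$, so that
\[
u_k'(1)-u_{k+1}'(1)=R_A^2\int_0^1\tau^{2k+1}f'(v_0(\tau))\bigl(u_k(\tau)-\tau^2 u_{k+1}(\tau)\bigr)\,d\tau,
\]
and split $u_k-\tau^2 u_{k+1}=u_k(1-\tau^2)+\tau^2(u_k-u_{k+1})$. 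The decisive point is that $\int_0^1\tau^{2k+1}(1-\tau^2)\,d\tau=\frac{2}{(2k+2)(2k+4)}=O(1/k^2)$, the factor $(1-\tau^2)$ supplying the extra decay; for the remaining piece I would use the monotonicity $0\leq u_k-u_{k+1}\leq u_k(1)-1=O(1/k^2)$ furnished by Lemma \ref{L:pre} and its proof. Bounding $u_k$ and $f'(v_0)$ by their maxima on $[0,1]$ then gives $u_k'(1)-u_{k+1}'(1)=O(1/k^2)$, and since $u_k(1)\to 1$ this decay transfers to $g_k-g_{k+1}$. Hence $P_k(g_k-g_{k+1})=O(k)$, so $E_k=O(k)$ and $\Delta_k=3\gamma k(k+1)-O(k)>0$ for all $k$ beyond some $k_1\geq\max\{k_0,2\}$. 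Combined with $G_k>0$, this yields $0<G_k<G_{k+1}$ for $k\geq k_1$, as claimed.
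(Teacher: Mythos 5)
Your proof is correct, and its overall architecture coincides with the paper's: both arguments cross--multiply the two fractions, identify the leading term $3\gamma k^2$ (in the paper, $C(3k^2+3k)$ with $C=2(f(1)-A)/A$), and reduce everything to showing that the terms involving the consecutive differences of $u_k'(1)$ are of strictly lower order. The difference lies in how that decay is obtained. The paper integrates $k\,(u_k'(1)-u_{k+1}'(1))=R_A^2\int_0^1 k\tau^{2k+1}f'(v_0)[u_k-\tau^2u_{k+1}]\,d\tau$ by parts and concludes only that this quantity tends to $0$, i.e.\ $u_k'(1)-u_{k+1}'(1)=o(1/k)$, which after multiplication by $k^3$ gives an error of size $o(k^2)$ --- just enough to be beaten by $3\gamma k^2$. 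You instead split $u_k-\tau^2u_{k+1}=u_k(1-\tau^2)+\tau^2(u_k-u_{k+1})$ and estimate directly, the factor $1-\tau^2$ contributing the extra power via $\int_0^1\tau^{2k+1}(1-\tau^2)\,d\tau=O(1/k^2)$ and the second piece being handled by $0\leq u_k-u_{k+1}\leq u_k(1)-1=O(1/k^2)$ (which indeed follows from the monotonicity facts inside the proof of Lemma \ref{L:pre}). This yields the quantitatively stronger bound $u_k'(1)-u_{k+1}'(1)=O(1/k^2)$, hence $E_k=O(k)$ rather than $o(k^2)$, and avoids the integration by parts (and the second derivative $f''$) entirely; your normalised form $\Delta_k$ is also tidier than the paper's chain of equivalences. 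One small expository point: the step ``this decay transfers to $g_k-g_{k+1}$'' deserves the one-line computation $\frac{a_k}{c_k}-\frac{a_{k+1}}{c_{k+1}}=\frac{(a_k-a_{k+1})c_{k+1}+a_{k+1}(c_{k+1}-c_k)}{c_kc_{k+1}}$ together with $|c_{k+1}-c_k|=O(1/k^2)$ from Lemma \ref{L:pre}, but all the needed ingredients are already in place.
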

\begin{proof}
From \eqref{50} we obtain  by partial integration that
\begin{align*}
k\cdot (u'_{k}(1)-u'_{k+1}(1)) =& \,R_A^2 \int_0^1 k \tau^{2k+1} f'(v_0(\tau)) [u_k(\tau)-\tau^2 u_{k+1}( \tau)] \, d\tau \\[1ex]
= &\, R_A^2 k\left.\frac{\tau^{2k+2}}{2k+2}f'(v_0(\tau)) [u_k(\tau)-\tau^2 u_{k+1}( \tau)] \right|_0^1\\[1ex]
&- \int_0^1 k\frac{\tau^{2k+2}}{2k+2}\left\{  f''(v_0(\tau)) \left [u_k(\tau)-\tau^2 u_{k+1}( \tau)\right] \phantom{\int}\right.\\[1ex]
&+ \left. f'(v_0(\tau))\phantom{\int} \hspace{-0.2cm}[u_k'(\tau)-2\tau u_{k+1}(\tau) -\tau^2 u_{k+1}'(\tau)]  \right\}\, d\tau \\[1ex]
\leq & \,R_A^2f'(1) \frac{k}{2k+2} (u_k(1)- u_{k+1}( 1))+ L\frac{k}{(k+1)^2},
\end{align*}
with a constant $L$ independent of $k$. 
Letting now $k \to \infty$ we get 
\begin{align}\label{anan+1}
k\cdot (u'_{k}(1)-u'_{k+1}(1))\underset {k \to \infty }\longrightarrow 0.
\end{align}
Having this estimates at hand we can prove now the assertion of the lemma. 
For simplicity we set  $C:= 2(f(1)-A)/A >0$, $a_k:=u_k'(1),$ and $b_k:=1+u_k(1).$ 
In virtue of
$$G_k = \frac{2}{AR^3} \frac{\displaystyle{k^3-k}}{\displaystyle{C- \frac{u_k'(1)}{u_k(1)}}}. $$
 we compute
 \begin{align*}
G_{k+1}> G_k \Leftrightarrow \, &  \frac{(k+1)^3-(k+1)}{C-  \displaystyle\frac{a_{k+1}}{1+b_{k+1}}} >  \frac{k^3-k}{C- \displaystyle\frac{a_k}{1+b_k}}\\[1ex]
\Leftrightarrow \,&  C(k+1)^3-C(k+1)-Ck^3+Ck \\[1ex]
&> \frac{a_{k+1}}{1+b_{k+1}}(k-k^3)+ \frac{a_k}{1+b_k} [(k+1)^3-(k+1)]\\[1ex]
\Leftrightarrow \, &  C(3k^2+3k) > \frac{a_{k+1}(k-k^3)+a_k(k^3+3k^2+2k)}{(1+b_k)(1+b_{k+1})}\\[1ex]
&\phantom{aaaaaaaaaa} +\frac{a_{k+1}b_k (k-k^3)+a_k b_{k+1}(k^3+3k^2+2k)}{(1+b_k)(1+b_{k+1})}\\[1ex]
\Leftrightarrow \, &  C(3k^2+3k) > \frac{k^3(a_k-a_{k+1})+a_{k+1}k+a_k(3k^2+2k)}{(1+b_k)(1+b_{k+1})}\\[1ex]
&\phantom{aaaaaaaaaa} +\frac{a_{k+1}b_k (k-k^3)+a_k b_{k+1}(k^3+3k^2+2k)}{(1+b_k)(1+b_{k+1})}.
\end{align*}
 Taking into consideration the relations \eqref{eq:est} and \eqref{anan+1}, we find a positive integer $k_1$
such that  strict inequality holds in the last relation above for all $k\geq k_1.$
\end{proof}

Finally, we set  
\[
 G_\bullet:= \frac{\displaystyle\frac{1}{R_A^3 }k_1^3 
-\frac{1}{R_A^3 }k_1}{\underset{0\leq k\leq k_1}\min  \left \{\left |f(1)-A-\displaystyle\frac{A}{2} 
\displaystyle\frac{u_k'(1)}{u_k(1)}\right|:f(1)-A-\displaystyle\frac{A}{2} \frac{u_k'(1)}{u_k(1)}\not =0\right\}}.
\]

 The main result of this paper is the following theorem:
\begin{thm}\label{BifTh}
Assume  $A\in(0,f(1))$ and that
\begin{equation}\label{eq:feri}
\frac{A}{2} \frac{u_0'(1)}{u_0(1)}+A-f(1) \not = 0 
\end{equation} 
holds true.
Let $l\geq 2$ be fixed and  $k\in \N,$ $k\geq 1$ such that $G_{kl}>G_\bullet.$ 
The pair $(G_{kl}, 0)$ is a bifurcation point from the trivial solution $\Sigma$.
More precisely, in a suitable neighbourhood of $(G_{kl}, 0),$ there exists 
a smooth branch of solutions $\left(G^{kl}(\varepsilon), \rho^{kl}(\varepsilon )\right)$ of problem \eqref{9}.
 For $\e\to0$, we have the following asymptotic expressions:
 \begin{eqnarray*}
 &&G^{kl}(\varepsilon )= G_{kl}+ O(\e), \\[1ex]
 &&\rho^{kl}(\varepsilon )=\e\cos(kls)+O(\e^2).
\end{eqnarray*}
Moreover, any other $G>  G_\bullet,$ $G\not\in \{G_{kl}:k\geq1\},$  is not a bifurcation point.
\end{thm}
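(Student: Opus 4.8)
The plan is to apply the Crandall--Rabinowitz theorem on bifurcation from simple eigenvalues to the operator equation \eqref{9}. The obstruction to a direct application is that, by \eqref{eq:PHI}--\eqref{eq:symbol}, every nonzero eigenvalue $\mu_k(G)$ of $\p\Phi(G,0)$ has geometric multiplicity at least two: since $\mu_k$ depends only on $|k|$, it annihilates both $\cos(ks)$ and $\sin(ks)$. To force a one-dimensional kernel I would work not on the full space but on the closed subspace
\[
X:=\{\h\in h^{4+\alpha}(\s)\ :\ \h(s)=\h(-s)=\h(s+2\pi/l)\ \text{for all}\ s\}
\]
of even, $2\pi/l$-periodic functions, together with the analogous subspace $Y\subset h^{1+\alpha}(\s)$, and set $\V_l:=\V\cap X$. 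Because the geometric problem \eqref{eq:problem} is invariant under reflections and under rotations of angle $2\pi/l$, the operator $\Phi$ is equivariant for the induced dihedral action; hence $\Phi(\R\times\V_l)\subseteq Y$, and on $X$ the linearisation $\p\Phi(G,0)$ is diagonalised by the modes $\cos(mls)$, $m\geq0$, with eigenvalues $\mu_{ml}(G)$. Only the modes that are multiples of $l$ survive, which is exactly what makes the converse statement (below) correct.

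\textbf{Verifying the hypotheses at $(G_{kl},0)$.} First, the kernel. By \eqref{Gk} we have $\mu_{kl}(G_{kl})=0$, so $\cos(kls)\in\ker\p\Phi(G_{kl},0)|_X$, and it remains to show $\mu_{ml}(G_{kl})\neq0$ for every $m\neq k$, $m\geq0$. For $m=0$ this follows from \eqref{eq:feri} together with $G_{kl}>G_\bullet>0$. If $m\geq1$ and $ml\leq k_1$, then either the denominator in \eqref{Gk} vanishes, in which case $\mu_{ml}$ equals the nonzero constant $-R_A^{-3}\bigl((ml)^3-ml\bigr)$ (note $ml\geq l\geq2$), or $|G_{ml}|\leq G_\bullet<G_{kl}$ by the very definition of $G_\bullet$, so that $\mu_{ml}(G_{kl})\neq0$. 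Finally, $G_{kl}>G_\bullet$ forces $kl>k_1$ (otherwise $|G_{kl}|\leq G_\bullet$); thus for $m\geq1$ with $ml>k_1$ and $m\neq k$ both indices lie in the range where Lemma \ref{propGk} gives strict monotonicity of $j\mapsto G_j$, whence $G_{ml}\neq G_{kl}$ and $\mu_{ml}(G_{kl})\neq0$. Therefore $\ker\p\Phi(G_{kl},0)|_X=\mathrm{span}\{\cos(kls)\}$. Second, the range: since $\mu_{ml}(G_{kl})\to-\infty$ like $-R_A^{-3}(ml)^3$ and only $\mu_{kl}(G_{kl})$ vanishes, the diagonal operator $\p\Phi(G_{kl},0)|_X$ is Fredholm of index zero (cf. \cite{EM1}), and its range is the closed complement of $\mathrm{span}\{\cos(kls)\}$, hence of codimension one.

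\textbf{Transversality and conclusion.} Since $\p\Phi(G,0)[\cos(kls)]=\mu_{kl}(G)\cos(kls)$ and $\mu_{kl}$ is affine in $G$ by \eqref{eq:symbol}, I obtain
\[
\p_G\p\Phi(G_{kl},0)[\cos(kls)]=-\Bigl(\frac{A}{2}\frac{u_{kl}'(1)}{u_{kl}(1)}+A-f(1)\Bigr)\cos(kls),
\]
whose coefficient is nonzero precisely because $G_{kl}$ is well defined through \eqref{Gk}; as $\cos(kls)$ spans a complement of the range, transversality holds. The Crandall--Rabinowitz theorem then yields a smooth local branch $(G^{kl}(\e),\h^{kl}(\e))$ of solutions of \eqref{9} in $\R\times\V_l$ with $\h^{kl}(\e)=\e\cos(kls)+O(\e^2)$ and $G^{kl}(\e)=G_{kl}+O(\e)$; by the equivalence of \eqref{9} with \eqref{eq:problem}, and since $kl\geq2$, these are genuine non-radial steady states.

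\textbf{The converse.} Let $G>G_\bullet$ with $G\notin\{G_{kl}:k\geq1\}$. Repeating the kernel computation verbatim shows $\mu_{ml}(G)\neq0$ for all $m\geq0$, so $\p\Phi(G,0)|_X$ is an isomorphism; the implicit function theorem then gives a neighbourhood of $(G,0)$ in which the only solutions of \eqref{9} are the trivial ones, so $(G,0)$ is not a bifurcation point. The principal difficulty of the whole argument is the kernel step: only the combination of the explicit threshold $G_\bullet$, which disposes of the finitely many low modes (in particular the degenerate mode with $\mu_1\equiv0$), with the asymptotic monotonicity of Lemma \ref{propGk} pins the kernel down to exactly one dimension.
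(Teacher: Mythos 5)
Your proposal is correct and follows essentially the same route as the paper: restriction of $\Phi$ to the even, $2\pi/l$-periodic subspaces, elimination of the low modes (including the degenerate mode $\mu_1\equiv 0$) via the threshold $G_\bullet$ and of the high modes $m\neq k$ via the monotonicity of Lemma \ref{propGk}, verification of the Crandall--Rabinowitz hypotheses with kernel $\mathrm{span}\{\cos(kls)\}$ and the same transversality computation, and the implicit function theorem for the converse. The only cosmetic differences are that you justify the nonvanishing of the transversality coefficient by the well-definedness of $G_{kl}$ rather than by its sign for $kl>k_1$, and you spell out the Fredholm/codimension-one argument that the paper merely asserts.
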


Condition \eqref{eq:feri} is not to restrictive.
Particularly, if $R_A=1$  and $f=\id_{[0,\infty)}$ we have shown in \cite{EM1} 
that \eqref{eq:feri} is satisfied.

\section{Proof of the main result}
The main tool we use when proving Theorem \ref{BifTh} is the classical bifurcation result on bifurcations from simple eigenvalues due to Crandall and Rabinowitz:
\begin{thm}[see \cite{BT, CR}] \label{ThmCR}Let $X, $ $Y$ be real Banach spaces and $G(\lambda,u)$ be a $C^q$ $(q\geq3)$
mapping from a neighbourhood of a point $(\lambda_0,u_0)\in \R\times X$ into $Y$.
Let the following assumptions hold:

\begin{itemize}
\item[(i)] $G(\lambda_0,u_0)=0, \, \partial_\lambda G(\lambda_0,u_0)=0,$
\item[(ii)] $\Ker \p_uG(\lambda_0,u_0)$ is one dimensional, spanned by $v_0,$
\item[(iii)] $\im \p_uG(\lambda_0,u_0)$ has codimension 1,
\item[(iv)] $ \p_\lambda\p_\lambda G(\lambda_0,u_0)\in \im \p_uG(\lambda_0,u_0)$, 
$\p_\lambda\p_u G(\lambda_0,u_0)v_0\notin \im \p_uG(\lambda_0,u_0).$
\end{itemize}
Then $(\lambda_0,u_0)$ is a bifurcation point of the equation 
\begin{equation}\label{CR}
G(\lambda,u)=0
\end{equation}
 in the following sense: In a neighbourhood of $(\lambda_0,u_0)$ the set of solutions of equation \eqref{CR}
consists of two $C^{q-2}$ curves $\Sigma_1$ and $\Sigma_2$, which intersect only at the point $(\lambda_0,u_0).$ 
Furthermore, $\Sigma_1$, $\Sigma_2$ can be parameterised as follows:
\begin{itemize}
\item[$\Sigma_1:$] $(\lambda, u(\lambda)),$ $|\lambda-\lambda_0| \, \mbox{is} \, small, \, u(\lambda_0)=u_0, \, u'(\lambda_0)=0,$
\item[$\Sigma_2:$] $(\lambda(\e),u(\e)),$ $|\e| \, \mbox{is} \, small, \,  (\lambda(0),u(0))=(\lambda_0,u_0), \, u'(0)=v_0.$
\end{itemize}
\end{thm}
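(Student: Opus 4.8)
The plan is to establish Theorem~\ref{ThmCR} by a Lyapunov--Schmidt reduction to a scalar equation, followed by a second-order Taylor analysis that, under hypotheses (i)--(iv), exhibits the reduced bifurcation function as having a nondegenerate saddle at $(\lambda_0,u_0)$; the two branches $\Sigma_1,\Sigma_2$ then arise as the two transversal zero curves of that saddle. To set up the reduction I would fix the topological direct sum decompositions $X=\R v_0\oplus X_1$, with $X_1$ a closed complement of the one-dimensional kernel, and $Y=\im\p_uG(\lambda_0,u_0)\oplus \R w_0$, where $\R w_0$ is a one-dimensional complement of the codimension-one range, and let $Q\colon Y\to\im\p_uG(\lambda_0,u_0)$ denote the associated projection. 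Writing $u=u_0+sv_0+w$ with $s\in\R$ and $w\in X_1$, I split $G(\lambda,u)=0$ into the pair $QG=0$ and $(I-Q)G=0$. Since $\p_w\big(QG\big)(\lambda_0,0,0)=Q\p_uG(\lambda_0,u_0)|_{X_1}$ is an isomorphism onto $\im\p_uG(\lambda_0,u_0)$, the implicit function theorem yields a $C^q$ map $w=w(\lambda,s)$ with $w(\lambda_0,0)=0$ solving the first equation, and the whole problem collapses to the scalar equation $\Psi(\lambda,s):=(I-Q)G\big(\lambda,u_0+sv_0+w(\lambda,s)\big)=0$, with $\R w_0$ identified with $\R$.

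The heart of the argument is the derivative bookkeeping at $(\lambda_0,0)$. Differentiating the auxiliary equation and using (i) gives $\p_\lambda w(\lambda_0,0)=\p_s w(\lambda_0,0)=0$; hence $\Psi(\lambda_0,0)=0$, while $\p_\lambda\Psi(\lambda_0,0)=(I-Q)\p_\lambda G(\lambda_0,u_0)=0$ by (i) and $\p_s\Psi(\lambda_0,0)=(I-Q)\p_uG(\lambda_0,u_0)v_0=0$ because $v_0\in\Ker\p_uG(\lambda_0,u_0)$. Thus the full gradient of $\Psi$ vanishes and one must pass to the Hessian. Computing it with the same substitutions, the first part of (iv) yields $\p_\lambda^2\Psi(\lambda_0,0)=(I-Q)\p_\lambda^2G(\lambda_0,u_0)=0$, the second part of (iv) yields the mixed entry $\p_\lambda\p_s\Psi(\lambda_0,0)=(I-Q)\p_\lambda\p_uG(\lambda_0,u_0)v_0=:c\neq0$, and $\p_s^2\Psi(\lambda_0,0)=:d$ is some scalar. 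The Hessian therefore has determinant $-c^2<0$: it is nondegenerate and indefinite, so $(\lambda_0,0)$ is a nondegenerate saddle of $\Psi$.

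To extract the curves I would invoke the Morse lemma for $C^q$ functions, which furnishes a $C^{q-2}$ local change of variables conjugating $\Psi$ to the normal form $xy$; its zero set is exactly two transversally crossing $C^{q-2}$ curves, which pull back through the reduction to the two solution branches. Matching tangents via the factored quadratic part $s\big(c(\lambda-\lambda_0)+\tfrac{d}{2}s\big)$, one branch is tangent to $\{s=0\}$ and, parameterised by $\lambda$, gives $\Sigma_1=(\lambda,u(\lambda))$ with $u(\lambda_0)=u_0$ and $u'(\lambda_0)=0$; the other is tangent to $\{c(\lambda-\lambda_0)+\tfrac{d}{2}s=0\}$ and, parameterised by $\e=s$, gives $\Sigma_2=(\lambda(\e),u(\e))$ with $u'(0)=v_0$.

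The main obstacle is precisely that $(\lambda_0,0)$ is a \emph{degenerate} zero of $\Psi$, so that the implicit function theorem cannot separate the branches directly; the decisive point is therefore the Hessian computation, in which hypothesis (iv) is exactly what forces the off-diagonal entry $c$ to be nonzero (making the quadratic form nondegenerate and indefinite), while its first part together with (i) annihilates the $\p_\lambda^2$ entry and pins $\Sigma_1$ to the trivial direction. The two-derivative loss in the Morse lemma is also what accounts for the $C^{q-2}$ regularity of the branches and for the standing hypothesis $q\geq3$.
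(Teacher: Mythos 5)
The paper itself contains no proof of Theorem~\ref{ThmCR}: it is a classical result quoted with citations to \cite{CR} and \cite{BT}, and your Lyapunov--Schmidt reduction followed by the saddle-point/Morse-lemma analysis is precisely the argument of the cited source \cite{BT}, carried out correctly (the vanishing of the full gradient of $\Psi$ at $(\lambda_0,0)$, the Hessian entries $\p_\lambda^2\Psi=0$, $\p_\lambda\p_s\Psi=c\neq0$ forced by (iv), the determinant $-c^2<0$, the $C^{q-2}$ loss from the Morse lemma explaining $q\geq3$, and the tangency bookkeeping identifying $\Sigma_1$ with the branch tangent to $\{s=0\}$ and $\Sigma_2$ with $u'(0)=v_0$ all check out). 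The only step worth making explicit is that the splitting $Y=\im\p_uG(\lambda_0,u_0)\oplus\R w_0$ requires the range to be closed, which is automatic under (iii) by the standard fact that a finite-codimensional range of a bounded linear operator between Banach spaces is closed.
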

We want to apply Theorem \ref{ThmCR} to the particular problem \eqref{9}.
As we already mentioned
$\Phi(G,0)=0$ for all $G\in\R.$ 
However, since we can not estimate the eigenvalues $\mu_k(G), $ $1\leq k\leq k_1$ we have to eliminate them from the spectrum of $\p\Phi(G,0),$
 and also to reduce the dimensions of the eigenspace 
corresponding to an eigenvalue  $\mu_k(G)$, $k\geq k_1,$ which we may chose to be equal to $0$ if $G$ is large enough,  to one.
This is due to the fact that the dimension of the eigenspace corresponding to an arbitrary eigenvalue $\mu_k(G)$ is larger then $2,$ since $x^k $ and $x^{-k}$ are eigenvectors 
of this eigenvalue. 

This may be done by restricting the operator $\Phi$ to spaces consisting only of
 $2\pi/l-$periodic and even functions.
Given $k\in\N$ and $l\in\N, l\geq2$, we define 
 \[
h^{k+\alpha}_{e,l}(\s):=\{ \text{$\h\in h^{k+\alpha}(\s)\,:\, \h(x)=\h(\ov x)$ and $ \h(x)=\h(e^{2\pi i/l}x)$ for all $x\in\s$}\},
\]
where for $x\in \C,$ $\ov x$ denotes its  complex conjugate.
Set further $\V_{e,l}:=\V\cap h^{4+\alpha}_{e,l}(\s).$
By identifying functions on $\s$ with $2\pi-$periodic functions on $\R$ we can expand $\h\in h^{k+\alpha}_{e,l}(\s)$
in the following way
\[
\h(s)=\sum_{k=0}^\infty a_k \cos(kls),
\]
where
$a_k=2\wh\h(kl)$ for $k\geq0.$
With this notation we have:
\begin{lem}\label{L:rest2}
Given $l\geq 2$, the operator $\Phi$ maps smoothly $\R\times \V_{e,l}$ into $h^{1+\alpha}_{e,l}(\s),$
i.e. 
\[
\phi\in C^\infty(\R\times \V_{e,l}, h^{1+\alpha}_{e,l}(\s)).
\]
\end{lem}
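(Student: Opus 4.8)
The plan is to derive both assertions from two facts: that $\Phi\in C^\infty(\R\times\V, h^{1+\alpha}(\s))$ is already known, and that $\Phi$ is equivariant under the dihedral symmetry group singled out by the definition of $h^{r}_{e,l}(\s)$. I would begin with the smoothness. Since $h^{4+\alpha}_{e,l}(\s)$ is a closed linear subspace of $h^{4+\alpha}(\s)$ and $\V_{e,l}=\V\cap h^{4+\alpha}_{e,l}(\s)$ is open in it, the restriction of the smooth map $\Phi$ to $\R\times\V_{e,l}$ is smooth as a map into $h^{1+\alpha}(\s)$. Because $h^{1+\alpha}_{e,l}(\s)$ is likewise a closed subspace, it then suffices to prove the single inclusion $\Phi(G,\h)\in h^{1+\alpha}_{e,l}(\s)$ for every $(G,\h)\in\R\times\V_{e,l}$; smoothness as a map into the subspace follows automatically.

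For the inclusion I would exploit that a function lies in $h^{r}_{e,l}(\s)$ exactly when it is invariant under the dihedral group $D_l\subset O(2)$ generated by the reflection $x\mapsto\ov x$ and the rotation $x\mapsto e^{2\pi i/l}x$, both regarded as orthogonal maps of $\R^2$. Writing $O^*u:=u\circ O^{-1}$ for $O\in O(2)$, the key is the equivariance identity
\[
\Phi(G,O^*\h)=O^*\Phi(G,\h),\qquad O\in O(2).
\]
Granting this, if $\h\in\V_{e,l}$ then $O^*\h=\h$ for every generator $O$ of $D_l$, whence $O^*\Phi(G,\h)=\Phi(G,\h)$; that is, $\Phi(G,\h)$ is even and $2\pi/l$-periodic, as required.

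To establish the equivariance I would track each ingredient of $\Phi$ through an orthogonal change of variables. Using $|Ox|=|x|$ and the linearity of $O$ one checks directly that $\Theta_{O^*\h}=O\circ\Theta_\h\circ O^{-1}$, that $\0_{O^*\h}=O(\0_\h)$, and that $N_{O^*\h}=O^*N_\h$. Because $\Delta$, the nonlinearity $f$, the curvature $\kappa_{\Gamma}$ (taken with respect to the outer normal) and $|x|^2$ are all $O(2)$-invariant, the functions $\psi\circ O^{-1}$ and $p\circ O^{-1}$ solve the Dirichlet problems \eqref{7} and \eqref{8} on $\0_{O^*\h}$; by uniqueness these are the solutions associated with $O^*\h$, and composing with the intertwining relation for $\Theta$ yields $\x(O^*\h)=O^*\x(\h)$ and $\mS(G,O^*\h)=O^*\mS(G,\h)$. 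Substituting into \eqref{D:P} and using the gradient rule $\nabla(u\circ O^{-1})=O\,(\nabla u)\circ O^{-1}$ valid for orthogonal $O$, together with the transformation of the linear term $\frac{AG}{2}x\mapsto O\bigl(\frac{AG}{2}x\bigr)$ and the $O$-invariance $\langle Ou,Ov\rangle=\langle u,v\rangle$ of the Euclidean inner product, every factor $O$ appearing in the first slot of the bracket is matched by one in the second slot and cancels, leaving the equivariance identity.

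The main obstacle is this last bookkeeping step: one must confirm that the two gradients, the field $\nabla N_\h$, and the linear term each acquire exactly one factor $O$ when the point $\Theta_{O^*\h}(z)=O\Theta_\h(O^{-1}z)$ is substituted, so that orthogonality of $O$ removes all of them simultaneously. The orientation-reversing reflection deserves a separate glance, but since $\Phi$ is assembled solely from scalars, $O(2)$-invariant geometric data, and the invariant inner product, the computation is insensitive to $\det O$ and applies verbatim to both generators of $D_l$. Invoking the identity for the reflection and the rotation then finishes the proof.
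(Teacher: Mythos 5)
The paper itself gives no argument here: it simply refers to \cite[Lemma 5.5.2]{AM} and omits the proof, so there is nothing to compare line by line. Your equivariance argument is correct and complete: the identities $\Theta_{O^*\h}=O\circ\Theta_\h\circ O^{-1}$, $\0_{O^*\h}=O(\0_\h)$, $N_{O^*\h}=O^*N_\h$ all check out, the identification $\x(O^*\h)=O^*\x(\h)$ and $\mS(G,O^*\h)=O^*\mS(G,\h)$ uses the uniqueness of the Dirichlet problems \eqref{7} and \eqref{8} (which holds for \eqref{7} by the maximum principle since $f'>0$), and the final cancellation of the factors of $O$ in \eqref{D:P} works exactly as you describe; your reduction of smoothness into the closed subspace $h^{1+\alpha}_{e,l}(\s)$ to the pointwise inclusion of the image is also standard and valid. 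This is the natural symmetry argument and is presumably what the cited reference does.
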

\begin{proof}
The proof is similar to that of \cite[Lemma 5.5.2]{AM} and therefore we omit it.
\end{proof}

Finally, we come  to the proof of our main result.
\begin{proof}[Proof of Theorem \ref{BifTh}]
Fix now $l\geq 2$. 
We infer from
relation \eqref{eq:PHI}  that the partial  derivative
of the smooth mapping $\Phi:\R\times \V_{e,l}\to h^{1+\alpha}_{e,l}(\s)$ with respect to $\h$ at $(G,0)$ is the Fourier multiplier
\begin{equation}\label{drhophi}
\p_\h \Phi(G, 0)\left[ \sum_{k=0}^\infty a_k \cos(kls)\right] = \sum_{k=0}^\infty \mu_{kl} (G)a_k \cos(kls)
\end{equation}
for all $\h= \sum_{k=0}^\infty a_k \cos(klx)\in h^{4+\alpha}_{e,l}(\s),$ 
where $\mu_{kl}(G)$, $k\in\N,$ is defined by \eqref{eq:symbol}.
Our  assumption \eqref{eq:feri} implies  that $\mu_0(G) \not = 0.$ 

The proof is based on the following observation: if $G>G_\bullet$ and $\mu_{kl}(G)=0$ then it must hold that $G=G_{kl} $ and $kl> k_1.$
Indeed, we  notice  that if  $\mu_{kl}(G)=0,$ for some $kl\leq k_1$,  
then 
\begin{align*}
  G_\bullet < G=
\frac{\displaystyle \frac{1}{R_A^3 }(kl)^3 -\displaystyle\frac{1}{R_A^3 }kl}
{ \left |f(1)-A-\displaystyle\frac{A}{2}\displaystyle \frac{u_{kl}'(1)}{u_{kl}(1)}\right|}
\leq  G_\bullet,
\end{align*}
since, by \eqref{eq:feri} and $l\geq2$, $kl\geq2.$
If $kl> k_1$, then $\mu_{kl}(G)=0 $ iff
\begin{align*}
 G =\frac{-\displaystyle\frac{1}{R_A^3 }(kl)^3 +\displaystyle\frac{1}{R_A^3 }kl}
{\displaystyle\frac{A}{2} \displaystyle \frac{u_{kl}'(1)}{u_{kl}(1)}+A-f(1)}=G_{kl}.
\end{align*}

Let $1\leq k\in \N$ be given such that $G_{kl}>G_\bullet$.
From Lemma \ref{propGk} and the previous observation we get $\mu_{ml}(G_{kl})\neq 0$ for $m\neq k.$ 
Our assumption \eqref{eq:feri} ensures that the Fr\' echet derivative $\p_\h \Phi(G_{ml}, 0)$ of the restriction 
$\Phi:\R\times \V_{e,l} \subset h^{4+\alpha}_{e,l}(\s)\to h^{1+\alpha}_{e,l}(\s),$ which is given
 by relation \eqref{drhophi}, has a one dimensional kernel spanned by $\cos(kls).$ 
 We also  observe that its image is closed and has codimension  equal to one.

We are left now to prove  that the transversality condition $(iv)$ of Theorem \ref{ThmCR} holds. 
Since $G_{kl}>G_\bullet$, our observation implies  $kl\geq k_1+1$ and so  
\[
-\left( \frac{A}{2}\frac{u_{kl}'(1)}{u_{kl}(1)}+ A-f(1)\right)>0.
\]
Further on,  we get from relation \eqref{drhophi} and \eqref{eq:symbol} that
\begin{equation*}
\p_G\p_\h\Phi(0)\left[\cos(kls)\right]=- \left( \frac{A}{2}\frac{u_{kl}'(1)}{u_{kl}(1)}+ A-f(1)\right) \cos(kls),
\end{equation*}
and since $\cos(kls)\not \in \im \p_\h \Phi(G_{kl}, 0)$ we deduce that the assumption of  Theorem \ref{ThmCR} are all verified.
By applying Theorem \ref{ThmCR} we obtain the bifurcation result stated in Theorem \ref{BifTh}
 and the asymptotic expressions  for the bifurcation branches $\left(G^{kl}(\varepsilon), \rho^{kl}(\varepsilon )\right).$ 

Moreover, if $G>  G_\bullet$ and $G\neq G_{kl}$ for all $k$ with $kl\geq k_1+1$, then it must hold that
$\mu_{kl}(G)\neq 0$ for all $k\in \N,$ and we may apply \cite[Theorem 4.5.1]{AM} to obtain that $\p_\h \Phi(G, 0)$ is an isomorphisms.
The Implicit function theorem then states that $(G, 0)$ is not a bifurcation point and the proof is completed.
\end{proof}
The possible steady-states of problem \eqref{eq:problem} are depicted in Figure $1$.
\begin{figure}\label{F:bifu} 
$$\includegraphics[width=0.9\linewidth]{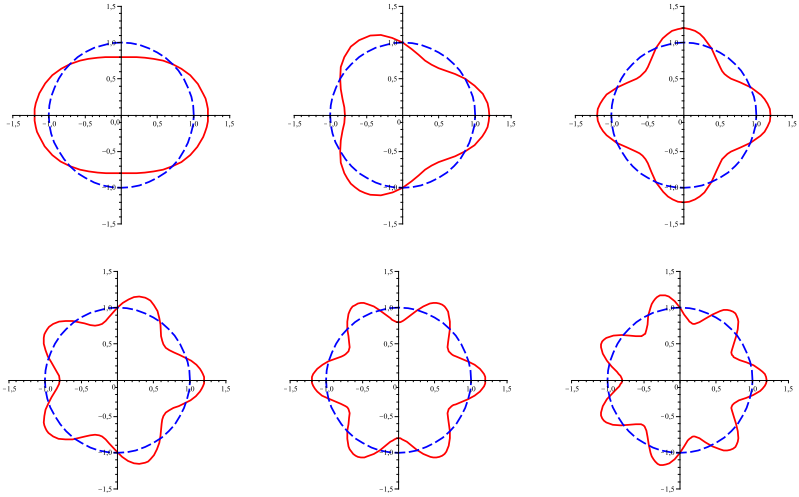}$$
\caption{Possible steady-states of the problem \eqref{eq:problem} }
 \end{figure}
\vspace{0.2cm}

\vspace{1cm}

\end{document}